\theoremstyle{plain}
\newtheorem{mydef}{Definition}[section]
\newtheorem{mylemma}[mydef]{Lemma}
\newtheorem{myprop}[mydef]{Proposition}
\theoremstyle{definition}
\newtheorem{myremark}[mydef]{Remark}
\theoremstyle{remark}
\title{On the Inner Automorphisms of a Singular Foliation}
\author{Alfonso Garmendia}
\thanks{KU Leuven, Department of Mathematics.}
\thanks{Email: \texttt{alfonso.garmendia@kuleuven.be},  \texttt{ori.yudilevich{@}kuleuven.be}.}
\author{Ori Yudilevich}
\date{November 1, 2018}
\begin{document}

\begin{abstract}
A singular foliation in the sense of Androulidakis and Skandalis is an involutive and locally finitely generated module of compactly supported vector fields on a manifold. An automorphism of a singular foliation is a diffeomorphism that preserves the module. In this note, we give a proof of the (surprisingly non-trivial) fundamental fact that the time-one flow of an element of a singular foliation (i.e. its exponential) is an automorphism of the singular foliation. This fact was previously proven in \cite{Androulidakis2009} using an infinite dimensional argument (involving differential operators), and the purpose of this note is to complement that proof with a finite dimensional proof in which the problem is reduced to solving an elementary ODE.
\end{abstract}

\maketitle

\section{Introduction}

A \textit{singular foliation} on a manifold $M$, in the sense of Androulidakis and Skandalis \cite{Androulidakis2009}, is a $C^\infty(M)$-module of compactly supported vector fields on $M$ that is involutive and locally finitely generated. By the Stefan-Sussmann Theorem, such a module gives rise to a smooth partition of the manifold by immersed submanifolds of possibly varying dimension. However, a given partition may arise from different modules, and the precise choice of a module can be viewed as extra data that encodes the ``dynamics'' along the leaves of the partition. Lie algebra actions on a manifold, and, more generally, Lie algebroids, are an important source of examples of singular foliations in this sense (simply apply the anchor of the Lie algebroid to its module of compactly supported sections).

An automorphism of a singular foliation is a diffeomorphism that pushes forward the module to itself. In \cite{Androulidakis2009} (Proposition 1.6), it was proven that the time one flow of an element of a singular foliation is an automorphism of the foliation -- an \textit{inner automorphism}. This fact is fundamental in the theory. It underlies the construction carried out in \cite{Androulidakis2009} of the so-called \textit{holonomy groupoid} that is canonically associated with any singular foliation, or, more precisely, it is used in the construction of a bi-submersion -- the basic building block of the holonomy groupoid -- out of any given finite set of local generators (see Proposition 2.10 (a) in \cite{Androulidakis2009}). The proof given in \cite{Androulidakis2009} of this fact is of an analytic nature (using the exponential of a certain differential operator), and the purpose of this note is to complement that proof with a ``finite dimensional'' proof in which the problem is reduced to solving an elementary ODE.  

The authors would like to thank Marco Zambon for helpful discussions and suggestions, and to the anonymous referee for his useful recommendations.  The first author was supported by IAP Dygest and by the FWO under EOS project G0H4518N. The second author was supported by the long term structural funding -- Methusalem grant of the Flemish Government. In addition, both authors were supported by the FWO research project G083118N.

\section{Basic Definitions} 

We denote the ring of smooth functions on a manifold $M$ by $C^\infty(M)$ and the $C^\infty(M)$-module of vector fields on $M$ by $\mathfrak{X}(M)$. The support of a function $f\in C^\infty(M)$ is denoted by $\mathrm{supp}(f)$ and, similarly, of a vector field $X\in\mathfrak{X}(M)$ by $\mathrm{supp}(X)\subset M$. We denote the ring of compactly supported smooth functions by $C^\infty_c(M)$. Our main object of interest is the $C^\infty(M)$-module of compactly supported vector fields on $M$ (a submodule of $\mathcal{X}(M)$) and its submodules 
\begin{equation*}
\mathcal{F}\subset \mathfrak{X}_c(M).
\end{equation*}

Let $U\subset M$ be an open subset. Viewing the $C^\infty(U)$-module $\mathfrak{X}_c(U)$ as a subset of $\mathfrak{X}_c(M)$ (by extension by zero), we may define the restriction to $U$ of a submodule $\mathcal{F}\subset \mathfrak{X}_c(M)$ as the submodule
\begin{equation*}
\mathcal{F}_U:= \mathcal{F}\cap \mathfrak{X}_c(U)\subset \mathfrak{X}_c(U).
\end{equation*}
Equivalently, $\mathcal{F}_U = C^\infty_c(U)\mathcal{F}$, where the inclusion $C^\infty_c(U)\mathcal{F} \subset \mathcal{F}\cap \mathfrak{X}_c(U)$ is clear, and the reverse inclusion follows from the fact that for any $X\in \mathfrak{X}_c(U)$ we can find $f\in C^\infty_c(U)$ such that $f|_{\mathrm{supp}(X)}=1$, and hence $X=fX$. 

A submodule $\mathcal{F}\subset \mathfrak{X}_c(M)$ is \textbf{finitely generated} if there exists a finite set of vector fields $Y^1,...,Y^r\in\mathfrak{X}(M)$, called \textit{generators}, such that
\begin{equation*}
\mathcal{F} = \langle Y^1,...,Y^r \rangle_{C^\infty_c(M)},
\end{equation*}
i.e. $\mathcal{F}$ is the $C^\infty_c(M)$-linear span of the generators. Note that the generators of $\mathcal{F}$ are not required to be in $\mathcal{F}$ and their support is not necessarily compact. A submodule $\mathcal{F}\subset \mathfrak{X}_c(M)$ is \textbf{locally finitely generated} if every point $x\in M$ has a neighborhood $U\subset M$ such that $\mathcal{F}_U$ is finitely generated. We will need the following useful lemma:

\begin{mylemma}
\label{lemma:unionoffinite} 
Let $\mathcal{F}\subset \mathfrak{X}_c(M)$ be a submodule.
\begin{enumerate}[wide,labelwidth=!,labelindent=0pt]
	\item[(a)] If $\mathcal{F}$ is finitely generated, then $\mathcal{F}_U$ is finitely generated for all open subsets $U\subset M$.
	\item[(b)] If $U,V\subset M$ are open subsets such that $\mathcal{F}_U$ and $\mathcal{F}_V$ are finitely generated, then $\mathcal{F}_{U\cup V}$ is finitely generated. 
	\item[(c)]  $\mathcal{F}$ is locally finitely generated if and only if, for all $\rho\in C^\infty_c(M)$, the submodule 
	\begin{equation*}
	\rho\mathcal{F}:=\{ \rho X \;|\; X\in\mathcal{F} \}\subset \mathfrak{X}_c(M)
	\end{equation*}
	is finitely generated. 
\end{enumerate}
\end{mylemma}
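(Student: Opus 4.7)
The plan is to prove the three parts in order, with part (b) requiring most of the work and parts (a) and (c) reducing to it via formal module manipulations.

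For part (a), I would use the identification $\mathcal{F}_U = C^\infty_c(U)\mathcal{F}$ noted in the paragraph preceding the lemma. If $Y^1,\dots,Y^r \in \mathfrak{X}(M)$ generate $\mathcal{F}$ over $C^\infty_c(M)$, then for any $X \in \mathcal{F}_U$ we can write $X = fW$ with $f \in C^\infty_c(U)$ and $W = \sum h_i Y^i \in \mathcal{F}$, and hence $X = \sum (fh_i) Y^i|_U$ with $fh_i \in C^\infty_c(U)$. So $Y^1|_U, \dots, Y^r|_U$ generate $\mathcal{F}_U$.

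For part (b), let $Y^1,\dots,Y^r \in \mathfrak{X}(U)$ generate $\mathcal{F}_U$ and $Z^1,\dots,Z^s \in \mathfrak{X}(V)$ generate $\mathcal{F}_V$. Choose a partition of unity $\phi,\psi$ subordinate to $\{U,V\}$ on $U \cup V$, so $\phi+\psi = 1$, $\mathrm{supp}(\phi) \subset U$, $\mathrm{supp}(\psi) \subset V$ (as closed subsets of $U \cup V$). Define $\tilde Y^i := \phi Y^i$ on $U$ and zero on $(U \cup V)\setminus \mathrm{supp}(\phi)$; since $\mathrm{supp}(\phi)$ is disjoint from $V\setminus U$ inside $U\cup V$, this extension is smooth on $U \cup V$, and similarly for $\tilde Z^j := \psi Z^j$. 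Given $X \in \mathcal{F}_{U\cup V}$, pick $\chi \in C^\infty_c(U\cup V)$ with $\chi \equiv 1$ on $\mathrm{supp}(X)$, so that $\chi \phi$ and $\chi \psi$ extend by zero to elements of $C^\infty_c(M)$; then $\phi X = (\chi \phi) X \in \mathcal{F}_U$ and $\psi X = (\chi \psi) X \in \mathcal{F}_V$, so there exist $f_i \in C^\infty_c(U)$ and $g_j \in C^\infty_c(V)$ with $\phi X = \sum f_i Y^i$ and $\psi X = \sum g_j Z^j$. Then $X = \phi X + \psi X = \sum f_i \tilde Y^i + \sum g_j \tilde Z^j$, where I use that $\phi Y^i = \tilde Y^i$ wherever $f_i$ is nonzero. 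Treating $f_i, g_j$ as elements of $C^\infty_c(U \cup V)$ by extension by zero, this exhibits $\{\tilde Y^i, \tilde Z^j\}$ as generators of $\mathcal{F}_{U \cup V}$. The main obstacle here is the smoothness of the extensions $\tilde Y^i, \tilde Z^j$ and the verification that multiplying by $\phi$ does not lose information — this is what forces the careful use of supports of the partition of unity.

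For part (c), the reverse direction is straightforward: given $x \in M$, choose a relatively compact neighborhood $U$ of $x$ and $\rho \in C^\infty_c(M)$ with $\rho|_U \equiv 1$; then $\mathcal{F}_U = (\rho\mathcal{F})_U$ (since any $X \in \mathcal{F}_U$ satisfies $X = \rho X \in \rho\mathcal{F}$), so finite generation of $\rho\mathcal{F}$ plus (a) gives finite generation of $\mathcal{F}_U$. For the forward direction, given $\rho \in C^\infty_c(M)$, cover the compact set $\mathrm{supp}(\rho)$ by finitely many opens $U_1,\dots,U_n$ on which $\mathcal{F}$ restricts to a finitely generated module; applying (b) inductively, $\mathcal{F}_U$ is finitely generated for $U := U_1 \cup \cdots \cup U_n$, say by $Y^1,\dots,Y^r \in \mathfrak{X}(U)$. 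Choose $\chi \in C^\infty_c(U)$ with $\chi \equiv 1$ on $\mathrm{supp}(\rho)$; then for any $X \in \rho\mathcal{F} \subset \mathcal{F}_U$, writing $X = \sum f_i Y^i$ with $f_i \in C^\infty_c(U)$ and using $X = \chi X$ yields $X = \sum f_i (\chi Y^i)$, so the compactly supported vector fields $\chi Y^i \in \mathfrak{X}_c(M)$ generate $\rho\mathcal{F}$.
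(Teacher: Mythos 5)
Your overall strategy coincides with the paper's (part (a) via $\mathcal{F}_U=C^\infty_c(U)\mathcal{F}$, part (b) via the generators $\phi Y^i,\psi Z^j$ built from a partition of unity, part (c) by covering $\mathrm{supp}(\rho)$ and invoking (a) and (b)), but two steps are incorrect as written. In part (b), from $\phi X=\sum_i f_i Y^i$ you conclude $\phi X=\sum_i f_i\tilde Y^i$. Since $\tilde Y^i=\phi Y^i$ on $U$ and the $f_i$ are supported in $U$, the right-hand side equals $\sum_i f_i\phi Y^i=\phi\bigl(\sum_i f_i Y^i\bigr)=\phi^2X$, not $\phi X$; adding the two pieces gives $(\phi^2+\psi^2)X=X-2\phi\psi X$ rather than $X$. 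The remark that ``$\phi Y^i=\tilde Y^i$ wherever $f_i$ is nonzero'' does not address this: the problem is the extra factor of $\phi$, not where $\tilde Y^i$ is defined. The repair (which is what the paper does) is to insert a cutoff \emph{before} expanding: choose $\lambda\in C^\infty_c(U)$ with $\lambda\equiv 1$ on the compact set $\mathrm{supp}(\phi X)\subset U$, so that $\phi X=\phi(\lambda X)$; since $\lambda X\in C^\infty_c(U)\mathcal{F}=\mathcal{F}_U$ one may write $\lambda X=\sum_i f_i Y^i$, and only then multiply by $\phi$ to get $\phi X=\sum_i f_i(\phi Y^i)=\sum_i f_i\tilde Y^i$. (You should also record the easy converse inclusion, that every $C^\infty_c(U\cup V)$-combination of the $\tilde Y^i,\tilde Z^j$ lies in $\mathcal{F}_{U\cup V}$, since ``finitely generated'' means equality with the span.)

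In the forward direction of part (c), your proposed generators $\chi Y^i$ only satisfy $\rho\mathcal{F}\subset\langle\chi Y^i\rangle_{C^\infty_c(M)}$, and the reverse inclusion fails in general: every element of $\rho\mathcal{F}$ vanishes wherever $\rho$ does, whereas $\chi Y^i$ need not (take $M=\mathbb{R}$, $\mathcal{F}=\mathfrak{X}_c(\mathbb{R})$, $Y^1=\partial_x$, and a point where $\chi=1$ but $\rho=0$). So you have only shown that $\rho\mathcal{F}$ is \emph{contained in} a finitely generated module, which is weaker than the claim. The correct generators are $\rho Y^i$: for $X\in\mathcal{F}$ one has $\rho X=\rho(\chi X)=\sum_i f_i(\rho Y^i)$ upon expanding $\chi X\in\mathcal{F}_U$ as $\sum_i f_iY^i$, and conversely $\sum_i g_i\,\rho Y^i=\rho\bigl(\sum_i g_i\chi Y^i\bigr)\in\rho\mathcal{F}$ because $\rho\chi=\rho$ and $g_i\chi\in C^\infty_c(U)$ puts $\sum_i g_i\chi Y^i$ in $\mathcal{F}_U\subset\mathcal{F}$. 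This is the content of the paper's observation that $\rho\mathcal{F}=\rho\,\mathcal{F}_U$. Part (a) and the reverse direction of (c) are correct and match the paper.
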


\begin{proof}
(a) Using that $\mathcal{F}_U=C^\infty_c(U)\mathcal{F}$, one easily sees that that the restrictions of generators of $\mathcal{F}$ to $U$ are generators of $\mathcal{F}_U$. 

(b) Let $X^1,...,X^r\in \mathfrak{X}(U)$ be generators of $\mathcal{F}_U$ and $Y^1,...,Y^s\in \mathfrak{X}(V)$ of $\mathcal{F}_V$, and choose a partition of unity $\{\rho_U,\rho_V\}$ subordinate to the open cover $\{U,V\}$ of $U\cup V$. We show that $\rho_U X^1,...,\rho_U X^r,\rho_V Y^1,...,\rho_V Y^s\in\mathfrak{X}(U\cup V)$ (or rather their extension by zero to $U\cup V$) are generators of $\mathcal{F}_{U\cup V}$, i.e. that $\mathcal{F}_{U\cup V}$ is equal to the $C^\infty_c(U\cup V)$-linear span of the generators. 

On the one hand, if $f\in C^\infty_c(U\cup V)$ then $f\rho_U\in C^\infty_c(U)$, since $\mathrm{supp}(f\rho_U) = \mathrm{supp}(f)\cap\mathrm{supp}(\rho_U)$ and a closed subset of a compact set is compact. Similarly, $f\rho_V\in C^\infty_c(V)$. Thus, using that $\mathcal{F}_U \subset \mathcal{F}_{U\cup V}$ and $\mathcal{F}_V \subset \mathcal{F}_{U\cup V}$, we see that any $C^\infty_c(U\cup V)$-linear combination of the generators is an element of $\mathcal{F}_{U\cup V}$. 

On the other hand, let $Z\in \mathcal{F}_{U\cup V}$ and write $Z = \rho_U Z + \rho_V Z$. For the reason explained above, $\rho_U Z \in \mathfrak{X}_c(U)$ and $\rho_V Z \in \mathfrak{X}_c(V)$. Choose functions $\lambda_U\in C^\infty_c(U)$ and $\lambda_V\in C^\infty_c(V)$ such that $\lambda_U|_{\mathrm{supp}(\rho_U X)}=1$ and $\lambda_V|_{\mathrm{supp}(\rho_V X)}=1$, we have that $Z = \rho_U \lambda_U Z + \rho_V \lambda_V Z$. Finally, $\lambda_U Z = \sum_{i=1}^r f_i X^i$ for some $f^i\in C^\infty_c(U)$ and $\lambda_V Z = \sum_{i=1}^s g_i \textcolor{blue}{Y}^i$ for some $g^i\in C^\infty_c(V)$, and so
\begin{equation*}
Z = \sum_{i=1}^r f_i \, \rho_U X^i + \sum_{i=1}^s g_i \, \rho_U Y^i. 
\end{equation*}

(c) We begin with the forward implication. Let $\rho\in C^\infty_c(M)$. There exists an open subset $U\subset M$ containing $\mathrm{supp}(\rho)$ such that $\mathcal{F}_U$ is finitely generated, namely we may choose a finite covering of $\mathrm{supp}(\rho)$ by open subsets $U_1,...,U_r$ such that all restrictions $\mathcal{F}_{U_i}$ are finitely generated and then apply part (b) a finite number of times to conclude that $\mathcal{F}_{U_1\cup...\cup U_r}$ is finitely generated. It is now easy to see that $\rho \mathcal{F}= \rho \mathcal{F}_U$ and hence $\rho\mathcal{F}$ is finitely generated.

For the reverse implication, given $x\in M$, we may construct an open neighborhood $U$ of $x$ and a $\rho\in C^\infty_c(M)$ such that $\rho|_U =1$, and we immediately see that $\mathcal{F}_U=\rho (\mathcal{F}_U)=(\rho \mathcal{F})_U$, which is finitely generated by part (a).
\end{proof}

\begin{mydef}
	A \textbf{singular foliation} on a manifold $M$ (in the sense of Androulidakis and Skandalis \cite{Androulidakis2009}) is a locally finitely generated submodule $\mathcal{F}\subset \mathfrak{X}_c(M)$ that is involutive, i.e. $[\mathcal{F},\mathcal{F}]\subset \mathcal{F}$. 
\end{mydef}

\section{Inner Automorphisms are Automorphisms}

Let $\mathcal{F}$ be a singular foliation on $M$, and let us denote the 
\textbf{automorphism group} of $\mathcal{F}$ by
\begin{equation*}
\mathrm{Aut}(\mathcal{F}) := \{\; \varphi\in\mathrm{Diff}(M)\;|\; \varphi_*(\mathcal{F})= \mathcal{F} \;\}.
\end{equation*}
Note that $\varphi\in\mathrm{Aut}(\mathcal{F})$ is an automorphism in the sense of $C^\infty(M)$-modules when viewed as the pair $((\varphi^{-1})^*,\varphi_*)$ consisting of the ring morphism $(\varphi^{-1})^*:C^\infty(M)\to C^\infty(M)$ and the module morphism $\varphi_*:\mathcal{F}\to \mathcal{F}$, since $\varphi_*(fX) = (\varphi^{-1})^*(f)\,\varphi_*(X)$ for all $X\in\mathcal{F},\;f\in C^\infty(M)$. 

Due to the compact support, any element $X\in\mathcal{F}$ is a complete vector field (i.e. its flow is defined for all times), and hence, denoting its flow by $\varphi_X^t$, we may define the \textbf{exponential map} $\mathrm{exp}:\mathcal{F}\to \mathrm{Diff}(M)$ by
\begin{equation*}
\mathrm{exp}(X):=\varphi_X^{1}.
\end{equation*} 
We now turn to the main objective of this paper, a proof of the following (surprisingly non-trivial) fact:

\begin{myprop}[Proposition 1.6 of \cite{Androulidakis2009}]
	Let $\mathcal{F}$ be a singular foliation on $M$. Then,
	\begin{equation*}
	\mathrm{exp}(\mathcal{F})\subset \mathrm{Aut}(\mathcal{F}).
	\end{equation*}
\end{myprop}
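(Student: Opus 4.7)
Setup and strategy. Given $X, Y \in \mathcal{F}$, set $Y_t := (\varphi_X^t)_* Y \in \mathfrak{X}_c(M)$. A direct computation using the flow identity $\varphi_X^{s+t} = \varphi_X^s \circ \varphi_X^t$ shows that $Y_t$ is smooth in $t$ and satisfies the linear ODE $\frac{d}{dt} Y_t = -[X, Y_t]$ with $Y_0 = Y$. Crucially, any smooth family $W_t \in \mathfrak{X}_c(M)$ satisfying the same initial value problem must agree with $Y_t$: differentiating $(\varphi_X^{-t})_* W_t$ yields $(\varphi_X^{-t})_*(\dot W_t + [X, W_t]) = 0$, so $(\varphi_X^{-t})_* W_t \equiv Y$, i.e.\ $W_t = Y_t$. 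The plan is therefore to exhibit a solution $\tilde Y_t$ that visibly lies in $\mathcal{F}$ for all $t$; uniqueness will then force $Y_1 = (\varphi_X^1)_* Y \in \mathcal{F}$, which is exactly the claim.

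Reduction to a finitely generated situation. The set $K := \bigcup_{t \in [0,1]} \varphi_X^t(\mathrm{supp}(Y))$ is compact, hence so is $K' := K \cup \mathrm{supp}(X)$. Covering $K'$ by finitely many open sets on each of which $\mathcal{F}$ has finitely many generators and combining them using part~(b) of Lemma~\ref{lemma:unionoffinite}, I obtain a single open neighborhood $U$ of $K'$ on which $\mathcal{F}_U$ admits generators $X_1, \ldots, X_r \in \mathfrak{X}(U)$. Writing $Y = \sum_j h_j^0 X_j$ with $h_j^0 \in C^\infty_c(U)$, I seek $\tilde Y_t$ of the form $\tilde Y_t = \sum_j h_j(t, \cdot) X_j$ with unknown $h_j(t, \cdot) \in C^\infty_c(U)$.

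Local ``structure constants'' and reduction to an ODE. Choose a cutoff $\lambda \in C^\infty_c(U)$ equal to $1$ on an open neighborhood of $K'$. Since $X \in \mathcal{F}$ and $\lambda X_j \in \mathcal{F}_U$, involutivity gives $[X, \lambda X_j] \in \mathcal{F}_U$. Because $\lambda$ is constant on a neighborhood of $\mathrm{supp}(X)$ while $X$ vanishes off $\mathrm{supp}(X)$, we have $X(\lambda) \equiv 0$, whence $[X, \lambda X_j] = \lambda[X, X_j]$. Expanding this element of $\mathcal{F}_U$ in the generators produces functions $a_{jk} \in C^\infty_c(U)$ with
\begin{equation*}
\lambda[X, X_j] = \sum_k a_{jk} X_k.
\end{equation*}
Since $K \subset \{\lambda = 1\}$ and the supports of the $h_j(t, \cdot)$ will lie in $K$, substituting the ansatz into $\dot{\tilde Y_t} = -[X, \tilde Y_t]$ reduces the problem to the system of linear first-order transport equations
\begin{equation*}
\partial_t h_k + X(h_k) + \sum_j a_{jk} h_j = 0, \qquad h_k|_{t=0} = h_k^0.
\end{equation*}

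Solving the ODE and conclusion. The method of characteristics converts this system into a linear ODE in $\mathbb{R}^r$ along the integral curves of $X$, which admits a unique smooth solution for all $t$; the compactness and location of $\mathrm{supp}(h_k^0)$ are preserved under the flow of $X$, so $h_k(t, \cdot) \in C^\infty_c(U)$ with support in $K$. Hence $\tilde Y_t \in \mathcal{F}_U \subset \mathcal{F}$ for all $t$, and by construction $\tilde Y_t$ solves the same initial value problem as $Y_t$. Uniqueness then gives $Y_t = \tilde Y_t \in \mathcal{F}$ for all $t$, in particular at $t=1$. I expect the main obstacle to be not solving the ODE itself---that is completely elementary---but the delicate setup of the previous paragraph: the local generators $X_j$ need not themselves belong to $\mathcal{F}$, so one cannot write $[X, X_j] \in \mathcal{F}_U$ naively; the cutoff $\lambda$ is introduced precisely to convert $[X, X_j]$ into a genuine element of $\mathcal{F}_U$ over the region traversed by the flow.
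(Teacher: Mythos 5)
Your proof is correct, and it shares the two essential ingredients of the paper's argument --- Lemma \ref{lemma:unionoffinite} to obtain finitely many local generators, and the cutoff trick exploiting $X(\lambda)=0$ to express the bracket of $X$ with a generator as a smooth combination of generators even though the generators themselves need not lie in $\mathcal{F}$ --- but the mechanics are genuinely different. The paper takes $U$ to be only a precompact neighborhood of $\mathrm{supp}(X)$, splits $Y=\rho_UY+\rho_VY$ by a partition of unity (the second piece being fixed by the flow), and then flows the \emph{generators}: it computes $\tfrac{d}{dt}((\varphi^t)_*Y^i)_x=(\varphi^t)_*[Y^i,X]_{\varphi^{-t}(x)}$, which at each fixed $x$ is a linear ODE in $(T_xM)^N$ solved by a matrix exponential, yielding $(\varphi^1)_*Y^i=\sum_j c^i_jY^j$ with visibly smooth coefficients. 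You instead flow the \emph{coefficients}: you enlarge $U$ to contain the whole flow-out of $\mathrm{supp}(Y)$, make the ansatz $\tilde Y_t=\sum_jh_j(t,\cdot)X_j$, reduce to a transport equation solved along characteristics, and identify $\tilde Y_t$ with $(\varphi^t_X)_*Y$ via a uniqueness lemma for the initial value problem $\dot W_t=-[X,W_t]$. The two ODEs are essentially adjoint to one another; the paper's version buys an explicit closed formula for $(\varphi^1)_*Y^i$ in which smoothness in the base point is manifest (the point stressed in its closing Remark), while yours dispenses with the partition of unity at the price of the extra uniqueness step. One small imprecision: since the generators may be linearly dependent, the supports of the coefficients $h^0_j$ in $Y=\sum_jh^0_jX_j$ need not lie in $\mathrm{supp}(Y)$, hence the $h_j(t,\cdot)$ are supported only in the compact set $\bigcup_j\mathrm{supp}(h^0_j)\cup\mathrm{supp}(X)\subset U$ rather than in $K$. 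This is harmless: that set is still compact in $U$, so $\tilde Y_t\in\mathcal{F}_U$, and the substitution $h_j[X,X_j]=\sum_k a_{jk}h_jX_k$ is valid everywhere because $[X,X_j]$ is supported in $\mathrm{supp}(X)$, where $\lambda\equiv 1$, so $\lambda[X,X_j]=[X,X_j]$ identically --- but you should say this rather than appeal to the location of the supports of the $h_j$.
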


\begin{proof}
We need to show that $(\varphi_X^1)_*(Y)\in \mathcal{F}$ for all $X,Y\in\mathcal{F}$. Indeed, this implies that $(\varphi_X^1)_*(\mathcal{F})\subset\mathcal{F}$, and since $X\in\mathcal{F} \Rightarrow -X\in\mathcal{F}$ and $(\varphi_X^1)_*((\varphi_{-X}^1)_*(Y))=Y$, also that $(\varphi_X^1)_*(\mathcal{F})=\mathcal{F}$. 

Let then $X,Y\in\mathcal{F}$ and let us shorten the notation for the flow of $X$ to $\varphi^t = \varphi_X^t$. Since $\mathrm{supp}(X)\subset M$ is compact, there exists a precompact open neighborhood $U$ of $\mathrm{supp}(X)$ in $M$. Let $\{\rho_U, \rho_V\}$ be a partition of unity subordinate to the open cover $\{U,V:=M\backslash \mathrm{supp}(X)\}$ of $M$. Since $U$ is precompact, $\rho_U$ has compact support, and hence $\rho_U\mathcal{F}$ is finitely generated by part (c) of Lemma \ref{lemma:unionoffinite}. Fixing generators $Y^1,...,Y^N\in\mathfrak{X}(U)$ of $\rho_U\mathcal{F}$, we may write
\begin{equation*}
Y = \rho_U Y + \rho_V Y =  \sum_{i=1}^N f_i Y^i + \rho_V Y,
\end{equation*}
for some $f_i\in C^\infty_c(U)$. Now, since $\varphi^t\big|_{M\backslash\mathrm{supp}(X)} = \mathrm{Id}$ for all $t$ and hence $(\varphi^1)_*(\rho_VY)=\rho_VY$, we see that the problem is reduced to showing that $(\varphi^1)_*(Y^i) = \sum_j f^i_j Y^j$, for some functions $f^i_j\in C^\infty(U)$. To this end, we compute the following:
\begin{equation*}
\begin{split}
\frac{d}{dt}((\varphi^t)_* Y^i)_x &= \frac{d}{dt}(\varphi^t)_* Y^i_{\varphi^{-t}(x)} \\ &= - \frac{d}{ds}\Big|_{s=0}(\varphi^{t-s})_* Y^i_{\varphi^{-t+s}(x)} \\ &= - (\varphi^t)_* \frac{d}{ds}\Big|_{s=0} (\varphi^{-s})_* Y^i_{\varphi^s(\varphi^{-t}(x))} \\
&= (\varphi^t)_*[Y^i,X]_{\varphi^{-t}(x)}.
\end{split}
\end{equation*}
We claim that $[Y^i,X] \in \mathcal{F}_U$. Indeed, choosing $\lambda\in C^\infty_c(U)$ that is 1 on $\mathrm{supp}(X)$, we have $[Y^i,X]= [Y^i,\lambda X] = Y^i(\lambda) X + \lambda[Y^i,X] = Y^i(\lambda) X + [\lambda Y^i,X]\in \mathcal{F}_U$, where we used that $X(\lambda)=0$, $X\in\mathcal{F}_U$ and $\mathcal{F}$ is involutive. Hence, we can write $[Y^i,X] = \sum_j \gamma^i_j Y^j$ for some functions $\gamma^i_j\in C^\infty_c(M)$. We thus have:
\begin{equation*}
\begin{split}
\frac{d}{dt}(\varphi^t)_* Y^i_{\varphi^{-t}(x)} =  \sum_{j=1}^N \gamma^i_j(\varphi^{-t}(x)) (\varphi^t)_*Y^j_{\varphi^{-t}(x)}.
\end{split}
\end{equation*}
Now, for a fixed $x\in M$, this is a system of $N$ equations indexed by $i$, each an equality  of curves in $T_xM$. Fixing a basis of $T_xM$, every component is a linear first ordinary partial differential equation of the type $\dot{v}(t)=A(t)v(t)$, with $v:I\to \mathbb{R}^N$ and $A:I\to \mathrm{End}(\mathbb{R}^N)$, and its solution is given by $v(t)= e^{\int_0^t A(\epsilon)d\epsilon}v(0)$. Thus, writing $\gamma$ for the $N\times N$ matrix whose entries are $\gamma^i_j$, we have at $t=1$ that:
\begin{equation*}
((\varphi^1)_*Y^i)_x =  (\varphi^1)_*Y^i_{\varphi^{-1}(x)} = \sum_{j=1}^N(e^{\int_0^1\gamma(\varphi^{-\epsilon}(x))d\epsilon})^i_j Y^j_x,
\end{equation*}
where the exponential is the exponential of $N\times N$ matrices. Clearly, the coefficients of $Y^j_x$ in the final expression are smooth functions of $x$, and hence we are done. As a bonus, we have also obtained an explicit formula for $(\varphi^1)_*Y^i$ in terms of the bracket of the $Y^i$'s with $X$ (which is encoded in the $\gamma$ matrices). 
\end{proof}

\begin{myremark}
	The above proof is essentially the proof of the fact that if a smooth (but possibly singular) distribution is involutive and its rank is constant along integral paths of its sections, then it is homogeneous, in the sense that the flow of any of its sections preserves the distribution (see e.g. Theorem 3.5.10 in \cite{Rudolph2013}). To adapt the proof of this fact to our proposition, one has to add the argument that the solution of the ODE one is solving is smooth with respect to the base point (i.e. the initial condition). However, the proof we give above goes a step further by giving an explicit formula in which the smoothness with respect to the base point is evident. 
\end{myremark}

\bibliographystyle{plain}

\begin{thebibliography}{1}
	
	\bibitem{Androulidakis2009}
	Iakovos Androulidakis and Georges Skandalis.
	\newblock The holonomy groupoid of a singular foliation.
	\newblock {\em J. Reine Angew. Math.}, 626:1--37, 2009.
	
	\bibitem{Rudolph2013}
	Gerd Rudolph and Matthias Schmidt.
	\newblock {\em Differential geometry and mathematical physics. {P}art {I}}.
	\newblock Theoretical and Mathematical Physics. Springer, Dordrecht, 2013.
	\newblock Manifolds, Lie groups and Hamiltonian systems.
	
\end{thebibliography}

\def\cprime{$'$} \def\cprime{$'$} \def\cprime{$'$}

\end{document}